\newtheorem{lemma}{Lemma}[section]
\newtheorem{theorem}[lemma]{Theorem}
\newtheorem{claim*}{Claim}
\newtheorem{remark}[lemma]{Remark}
\newtheorem{thm}[lemma]{Theorem}
\newcommand{\G}{{\mathbb G}}
\newcommand{\PP}{{\mathbb P}}
\newcommand{\Q}{{\mathbb Q}}
\newcommand{\Z}{{\mathbb Z}}
\newcommand{\Kbar}{{\overline{K}}}
\newcommand{\Lbar}{{\overline{L}}}
\newcommand{\Xbar}{{\overline{X}}}
\newcommand{\kk}{{\mathbf k}}
\newcommand{\Ktilde}{\widetilde{K}}
\newcommand{\Ltilde}{\widetilde{L}}
\newcommand{\calA}{{\mathcal A}}
\DeclareMathOperator{\im}{im}
\DeclareMathOperator{\Ind}{Ind}
\DeclareMathOperator{\Cor}{Cor}
\DeclareMathOperator{\Norm}{Norm}
\DeclareMathOperator{\Br}{Br}
\DeclareMathOperator{\divv}{div}
\DeclareMathOperator{\Pic}{Pic}
\DeclareMathOperator{\Jac}{Jac}
\DeclareMathOperator{\HH}{H}
\DeclareMathOperator{\N}{N}
\newcommand{\isom}{\cong}
\numberwithin{equation}{section}
\numberwithin{table}{section}
\newcommand{\defi}[1]{\textsf{#1}} 
\title{Transcendental Brauer elements via descent on elliptic surfaces}
\author{Bianca Viray}
\thanks{This research was partially supported by NSF grants DMS-1002933 and DMS-0841321 and a Ford Foundation dissertation year fellowship.}
\address{Department of Mathematics, Box 1917, Brown University, Providence, RI
			02912, USA}
\email{bviray@math.brown.edu}
\urladdr{http://math.brown.edu/\~{}bviray}
\date{}
\begin{document}

	\begin{abstract}
		Transcendental Brauer elements are notoriously difficult to compute.  Work of Wittenberg, and later, Ieronymou, gives a method for computing $2$-torsion transcendental classes on surfaces that have a genus $1$ fibration with rational $2$-torsion in the Jacobian fibration.  We use ideas from a descent paper of Poonen and Schaefer to remove this assumption on the rational $2$-torsion.
	\end{abstract}

	\maketitle

	\section{Introduction}
	
		Let $X$ be a smooth projective geometrically integral variety over a field $k$.  The \defi{Brauer group} of $X$, denoted $\Br X$, is the \'etale cohomology group $\HH_{\textup{et}}^2(X, \G_m)$; it can also be thought of as the \defi{unramified} part of $\Br \kk(X)$, where $\kk(X)$ is the function field of $X$.  This latter point of view is valuable because it gives us an explicit way to represent the \emph{elements} of $\Br X$, i.e., as unramified central simple algebras over $\kk(X)$.
		
		The Brauer group of $X$ always contains \defi{constant algebras} $\Br_0 X := \im\left(\Br k\to\Br X\right)$.
		These algebras encode little to no information about the arithmetic and geometry of $X$ so our study will focus on the quotient $\frac{\Br X}{\Br_0 X}$.  We are broadly motivated by the following problems.
		\begin{enumerate}
			\item Determine unramified central simple algebras$/\kk(X)$ that generate $\left(\frac{\Br X}{\Br_0 X}\right)[m]$.
			
			\item Given an unramified central simple algebra$/\kk(X)$, determine if it is trivial in $\frac{\Br X}{\Br_0 X}$.
		\end{enumerate}

	Solutions to the above problems have arithmetic applications.  If $k$ is a global field, then, by work of Manin~\cite{Manin-BMobs}, elements of the Brauer group can obstruct the existence of $k$-points, even if there exist local points, i.e. $k_v$-points for every completion $k_v$ of $k$.  Computation of this obstruction requires \emph{explicit} representations of the elements of $\frac{\Br X}{\Br_0 X}$; knowledge of the group structure of $\frac{\Br X}{\Br_0 X}$ does not suffice.

		From an arithmetic point of view, these elements are further subdivided: the \defi{algebraic} elements are those contained in $\Br_1 X := \ker(\Br X \to \Br \Xbar)$, and the \defi{transcendental} elements are those that give non-trivial classes in $\frac{\Br X}{\Br_1 X}$.  Many methods have been developed for finding explicit representatives for algebraic elements, at least for elements that are $2$ and $3$-torsion.  Most of these methods make heavy use of a spectral sequence which (under mild assumptions) gives the following isomorphism
		\begin{equation}\label{eqn:HS-Brauer-isom}
			\frac{\Br_1 X}{\Br_0 X} \stackrel{\sim}{\longrightarrow}
			\HH^1(\G_k, \Pic \Xbar).
		\end{equation}
		In contrast, very little is known about the computation of transcendental elements.  
		
		Indeed, only a handful of papers have addressed the above problems in the case of transcendental Brauer elements~\cite{Harari-transcendental, Wittenberg-transcendental, SSD-2descent, HS-Enriques, Ieronymou-transcendental, KT-effectivity, HVAV-K3, HVA-K3Hasse}.  The work of Kresch and Tschinkel is arguably the most general.  They show that there is an effective solution to the above problems for surfaces whose geometric Picard group is finitely generated and torsion free.  A large class of surfaces satisfy these assumptions, including general K3 surfaces, but there are some  exceptions, e.g. Enriques surfaces.
		
		In this paper, we focus on surfaces with a genus $1$ fibration.  Although general K3 surfaces have no such structure, Enriques surfaces are always equipped with a genus $1$ fibration.  Thus the cases we consider are neither a superset nor a subset of those studied in~\cite{KT-effectivity}.
		
		We build on two papers that explicitly compute transcendental $2$-torsion elements in the Brauer group of an elliptic K3 surface~\cite{Wittenberg-transcendental, Ieronymou-transcendental}.  Although Wittenberg and Ieronymou each focus on a particular K3 surface, we expect that their methods can be applied to compute $2$-torsion transcendental Brauer elements on any surface with a genus $1$ fibration such that
		\begin{enumerate}
			\item the fibration has a section over a quadratic extension, and
			\item the Jacobian fibration has rational $2$-torsion.
		\end{enumerate} 
		Their methods echo the classical descent methods used to compute the rank of an elliptic curve.  More precisely, they use explicit elements of the cohomology group $\HH^1(G_K, J[2])$ to construct Brauer elements on the generic fiber, and then determine which elements spread out to Brauer elements on the surface.

		If the $2$-torsion of the Jacobian fibration is \emph{not} rational, then it is difficult to compute the elements of $\HH^1(G_K, J[2])$.  This difficulty also arises in a more classical number theory problem, i.e., that of computing the rank of a Jacobian over a number field, and many papers have studied various methods of working around this difficulty. 
		
		We take our inspiration from a paper of Poonen and Schaefer~\cite{PS-descent}.  Instead of studying the full $\HH^1(G_K, J[2])$, we study a quotient of $\HH^1(G_K, J[2])$ and use elements of this quotient to construct Brauer elements of the generic fiber.  We then give a necessary condition for the elements to spread out to Brauer elements of the surface.  This condition rules out all but finitely many elements of the quotient.

		To give a precise statement of our result, we fix some notation.  Let $X$ be a smooth projective geometrically integral surface over an algebraically closed field of characteristic $0$ with $\pi\colon X \to W$ a genus $1$ fibration.  Let $K$ denote the function field of $W$, $C$ denote the generic fiber of $\pi$ and $J$ denote the Jacobian of $C$.  Recall that $\Br X\subseteq \Br C$.  

		\begin{theorem}\label{thm:main}
			Assume that $C$ has a model $y^2 = f(x)$ where $\deg(f) = 4$ and let $L$ be the degree $4$ \'etale algebra $K[\alpha]/(f(\alpha))$.
			{Then the following diagram commutes,}
			\begin{equation}\label{eqn:main}
				\xymatrix{
				\frac{J(K)}{2J(K)} \ar @{^{(}->}[r]  \ar [rd]^{x - \alpha}
				& \HH^1(G_K, J[2])\ar @{->>}[r] {{\ar@{->>}[d]}}
				& \HH^1(G_K, J)[2] \ar[d]\\
				&
				{{\ker \left(N:\frac{L^{\times}}{L^{\times2}K^{\times}} 
					\to \frac{K^{\times}}{K^{\times2}}\right)\ar[r]^-h}}
				&(\Br C)[2],
				}
			\end{equation}
			where $h\colon \ell \mapsto \Cor_{\kk(C_L)/\kk(C)}\left( (\ell, x - \alpha)_2\right)$.
			
			 Moreover, there is a finite set $\left(\ker N\right)_{S\textup{-unr}}$ such that 
			\[
				h^{-1}(\Br X) \subseteq \left(\ker N\right)_{S\textup{-unr}}.
			\]
		\end{theorem}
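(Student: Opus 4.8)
The plan is to read \eqref{eqn:main} as a commuting triangle (the two maps out of $\frac{J(K)}{2J(K)}$) glued to a commuting square along the surjection $\HH^1(G_K,J[2])\twoheadrightarrow\ker N$, and to prove each piece separately before treating the finiteness statement. Throughout I would exploit that $K$ is the function field of a curve over an algebraically closed field of characteristic $0$, so by Tsen's theorem $\Br K=0$ and $\cd(G_K)\le 1$; this forces $\HH^2(G_K,M)=0$ for every finite $G_K$-module $M$ and makes the cohomological identifications below clean and split-free. The geometric input I would isolate at the start is that, since $\Br\Cbar=0$ and $\HH^3(G_K,\Kbar^\times)=0$, Hochschild--Serre degenerates to an isomorphism $\HH^1(G_K,\Pic\Cbar)\xrightarrow{\sim}\Br C$; combined with the degree sequence $0\to J(\Kbar)\to\Pic\Cbar\to\Z\to 0$ and $\HH^1(G_K,\Z)=0$, the right vertical map is the natural map $\HH^1(G_K,J)\to\HH^1(G_K,\Pic\Cbar)\cong\Br C$ restricted to $2$-torsion.

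For the identifications I would first set up the Poonen--Schaefer description \cite{PS-descent} of $J[2]$ in the degree $4$ model: writing $\calK=\ker\bigl(\Norm\colon\mu_{2,L}\to\mu_2\bigr)$, where $\mu_{2,L}:=\mu_2(L\otimes_K\Kbar)$ is the permutation module on the four roots of $f$, one has a Galois-module isomorphism $J[2]\cong\calK/\mu_2$ (diagonal $\mu_2$), encoding the three unordered pairings of the roots. Taking $G_K$-cohomology of
\[
1\to\mu_2\to\calK\to J[2]\to 1, \qquad 1\to\calK\to\mu_{2,L}\xrightarrow{\Norm}\mu_2\to 1,
\]
and using Shapiro/Hilbert 90 ($\HH^1(G_K,\mu_{2,L})=L^\times/L^{\times2}$, $\HH^1(G_K,\mu_2)=K^\times/K^{\times2}$) together with $\HH^2(G_K,\mu_2)=0$, I would obtain the left vertical surjection onto $\ker\bigl(N\colon\frac{L^\times}{L^{\times2}K^\times}\to\frac{K^\times}{K^{\times2}}\bigr)$. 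The left triangle is then a cocycle check: for $P\in J(K)$ choose $Q$ with $2Q=P$, so the Kummer class is $\sigma\mapsto\sigma Q-Q\in J[2]$; pushing this through the two connecting maps and unwinding $J[2]\cong\calK/\mu_2$ recovers the class of $x(P)-\alpha$ in $\frac{L^\times}{L^{\times2}K^\times}$. This is essentially the descent computation of \cite{PS-descent}, which I would cite and adapt rather than redo in full.

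The heart is the right square. My strategy is to base-change to $F=\kk(C)$, where $C$ acquires the tautological point $P_{\mathrm{gen}}\in C(F)$, and to describe the composite $\HH^1(G_K,J[2])\to(\Br C)[2]\hookrightarrow\Br F$ as $\xi\mapsto\xi|_F\cup_{e_2}\beta$, where $\beta\in\HH^1(G_F,J[2])$ is the Abel--Jacobi/Kummer image of $P_{\mathrm{gen}}$ and $\cup_{e_2}$ denotes cup product followed by the Weil pairing $e_2\colon J[2]\otimes J[2]\to\mu_2$ into $\HH^2(G_F,\mu_2)=(\Br F)[2]$; this is the Wittenberg--Ieronymou recipe \cite{Wittenberg-transcendental, Ieronymou-transcendental}, which I would justify by comparing the edge maps of the Hochschild--Serre spectral sequences for $C$ and for $\Spec F$. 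It then remains to identify this with $\Cor_{F_L/F}\bigl((\ell,x-\alpha)_2\bigr)$, $F_L=\kk(C_L)$. The left triangle applied over $F$ to $P_{\mathrm{gen}}=(x,y)$ shows $\beta$ corresponds to the function $x-\alpha$; and under $J[2]\cong\calK/\mu_2$ the Weil pairing is induced by the tautological pairing on $\mu_{2,L}$, whose cup product is computed by corestriction via Shapiro's lemma, turning $\xi|_F\cup_{e_2}\beta$ into $\Cor_{F_L/F}\bigl(\ell\cup(x-\alpha)\bigr)=\Cor_{F_L/F}\bigl((\ell,x-\alpha)_2\bigr)$. I expect this last identification to be the main obstacle: one must check that the Weil pairing, under the quotient by the diagonal $\mu_2$ and the norm-one constraint, is \emph{exactly} the pairing making cup product equal corestriction of the local symbol, with no stray factor from the leading coefficient of $f$ — this is precisely where the passage to $\frac{L^\times}{L^{\times2}K^\times}$ and the condition $\ell\in\ker N$ are forced, so the bookkeeping must be done with care.

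Finally, for the \emph{moreover} clause I would define $\left(\ker N\right)_{S\textup{-unr}}\subseteq\ker N$ to consist of the classes unramified outside a finite set $S$ of places of $W$ (containing the bad fibres of $\pi$ and the support of the discriminant of $f$); finiteness then follows from finite generation of $S$-units and finiteness of the $S$-class group of $W\setminus S$, exactly as in the classical proof that $2$-Selmer groups are finite. For the inclusion $h^{-1}(\Br X)\subseteq\left(\ker N\right)_{S\textup{-unr}}$, I would compute the residues of $h(\ell)=\Cor_{\kk(C_L)/\kk(C)}\bigl((\ell,x-\alpha)_2\bigr)$ at the codimension-$1$ points of a regular model of $X$: functoriality of residue maps under corestriction expresses these via the tame symbol of $(\ell,x-\alpha)_2$, so demanding $h(\ell)\in\Br X$ (unramified on $X$) forces $\ell$ to be unramified away from $S$. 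The subtlety here is controlling the residues along the vertical divisors coming from reducible fibres, which I would handle using the explicit geometry of $\pi$ near the bad fibres.
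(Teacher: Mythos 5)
Your proposal contains two concrete gaps, both in the ``moreover'' clause. First, your finiteness argument invokes ``finite generation of $S$-units and finiteness of the $S$-class group of $W\setminus S$,'' but here the ground field is algebraically closed, so both statements are false: the unit group of any affine piece of $Z$ contains the divisible group $\kbar^{\times}$, and $\Pic^0$ of the relevant curves is an infinite divisible group, not a finite class group. What rescues finiteness is precisely that divisibility, and this is how the paper argues: constants are squares, and since $\Jac(W)$ and $\Jac(Z)$ are $2$-divisible one may modify a representative $\ell$ by an element of $K^{\times}$ (to kill the odd-parity fibers outside $S$) and by finitely many functions with divisor of the form $P - Q + 2D$ with $P, Q \in \varpi^{-1}(S)$, reducing to elements whose divisor is twice a divisor; such classes are governed by the finite group $\Jac(Z)[2]$ together with finite combinatorial data supported on $\varpi^{-1}(S)$. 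Note also that the correct notion of ``$S$-unramified'' in $L^{\times}/L^{\times2}K^{\times}$ is parity \emph{equality}, $v_P(\ell)\equiv v_Q(\ell)\pmod 2$ for all $P,Q$ over a given $t\notin S$, not evenness of valuations, since multiplying by $\gamma\in K^{\times}$ shifts all valuations over $t$ simultaneously. Second, for the inclusion $h^{-1}(\Br X)\subseteq \left(\ker N\right)_{S\textup{-unr}}$, functoriality of residues under corestriction only \emph{computes} the residue of $h(\ell)$ along a smooth fiber $X_{t_0}$ as a product of the $(x-\alpha_i(t_0))^{v_{P_i(t_0)}(\ell)}$; the substantive point your sketch omits is that this class is a \emph{nonsquare} in $\kappa(X_{t_0})^{\times}/\kappa(X_{t_0})^{\times2}$ when the parities differ. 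The paper proves this by scaling so the residue becomes $(x-\alpha_1(t_0))(x-\alpha_2(t_0))$ and observing that $P_1(t_0)-P_2(t_0)$ is a nontrivial $2$-torsion class in $\Pic(X_{t_0})$; without this step, ``ramified implies not in $\Br X$'' does not follow. Meanwhile the subtlety you do flag, residues along reducible fibers, is a red herring: all bad fibers are placed in $S$ by construction, so nothing needs controlling there.

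On the commutative diagram itself, your route through the right-hand square is genuinely different from the paper's. You propose to identify the composite $\HH^1(K,J[2])\to(\Br C)[2]$ with cup product against the Kummer class of the generic point under the Weil pairing, using $J[2]\cong\calK/\mu_2$ and Shapiro's lemma to convert the cup product into $\Cor_{\kk(C_L)/\kk(C)}\left((\ell, x-\alpha)_2\right)$. The paper instead verifies commutativity by an explicit symbol computation in the case where $f$ splits completely (where the composite is the classical formula $(d_1,d_2)\mapsto (d_1, X-a_1)_2 + (d_2, X-a_2)_2$, with constant discrepancies killed by Tsen), and then descends to general $K$ via the surjectivity of $\Cor\colon \HH^1(\Ktilde, J[2])\to \HH^1(K, J[2])$, proved through the tower $K\subseteq K_1\subseteq K_2\subseteq \Ktilde$ with an induced-module/Shapiro computation. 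Your approach is plausible and is essentially how Wittenberg and Ieronymou proceed when the $2$-torsion is rational, but you yourself defer the Weil-pairing/corestriction identification as ``the main obstacle'' without resolving it; the paper's corestriction-surjectivity lemma exists precisely so that this identification never has to be carried out with nonrational $2$-torsion. As written, then, the central step of your argument for the square, as well as the two points above, remain open.
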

			
			The finite set $\left(\ker N\right)_{S\textup{-unr}}$ should be thought of as a set which agrees with a ``fake $2$-Selmer group'' away from a set $S$ of finitely many places.  The idea of studying the Brauer group of an elliptic surface via a ``Selmer-like'' object is not new.  Colliot-Th\'el\`ene, Skorobogatov, and Swinnerton-Dyer defined a \defi{geometric Selmer group} for any Jacobian fibration~\cite{CTSkorSD-GeometricSelmer}.  
			
			In the case of classical descent on elliptic curves, typically the $n$-Selmer group is useful theoretically, but difficult to explicitly compute for a generic elliptic curve.  In practice, to obtain an upper bound on the rank of an elliptic curve, one tends to use a group that agrees with the Selmer group away from finitely many places.  We expect that an analogous trade-off occurs with the geometric Selmer group of~\cite{CTSkorSD-GeometricSelmer} and the finite set $\left(\ker N\right)_{S\textup{-unr}}$ that we study in this paper.

		\subsection*{Outline}
			In \S\ref{sec:cohomology}, we show that cohomological arguments produce the commutative diagram in~\eqref{eqn:main}.  These cohomological arguments do not result in an explicit description of the map $h$; we prove that $h$ has the description given above in~\S\ref{sec:def_h}.  Finally, in~\S\ref{sec:Selmer}, we define $\left(\ker N\right)_{S\textup{-unr}}$, prove that this set is finite, and that $h^{-1}(\Br X) \subseteq \left(\ker N\right)_{S\textup{-unr}}$.  
			
		\subsection*{Notation}
			Throughout, $X$ will denote a smooth projective geometrically integral surface over an algebraically closed field of characteristic $0$.  We assume that there exists a morphism $\pi\colon X\to W$ such that the generic fiber is a smooth genus $1$ curve.  Let $K$ denote the function field of $W$; note that $K$ is a $C_1$ field.  Let $C$ denote the generic fiber of $\pi$.  We write $J$ for the Jacobian of $C$.  As stated in Theorem~\ref{thm:main}, we will assume that $C$ has a model of the form $y^2 = f(x)$ where $\deg(f) = 4$.  
			
			For any field $k$, $G_k$ will denote the absolute Galois group of $k$, and  $\HH^i(k, M)$ will denote group cohomology $\HH^i(G_k, M)$.  For $V$ a variety we write $\kk(V)$ for the function field of $V$.  For any valuation $v$ on $\kk(V)$, we will denote the residue field of $v$ by $\kappa(v)$.

	\section*{Acknowledgements}
		I thank Bjorn Poonen for suggesting the problem.  I also thank Jean-Louis Colliot-Th\'el\`ene, Bjorn Poonen, and Anthony V\'arilly-Alvarado for helpful conversations.
			
	\section{Obtaining the commutative diagram}\label{sec:cohomology}
	
		In order to obtain the commutative diagram in Theorem~\ref{thm:main}, we must first understand the Galois cohomology of $\Lbar := L\otimes_K\Kbar$ and various related Galois modules.  
		
		Consider the short exact sequence $0 \to \Kbar^{\times} \to \Lbar^{\times} \to 
		\Lbar^{\times}/\Kbar^{\times} \to 0$.  The long exact sequence in cohomology begins
			\[
				0 \to   K^{\times} \to L^{\times} \to 
				\HH^0\left(K, \frac{\Lbar^{\times}}{\Kbar^{\times}}\right)
				\to \HH^1(K, \Kbar^{\times}) \to 
				\HH^1(K, \Lbar^{\times}) \to 
				\HH^1\left(K, \frac{\Lbar^{\times}}{\Kbar^{\times}}\right)
				\to \Br K.
			\]
		By Tsen's Theorem~\cite[Thm. 6.2.8]{GS-csa}, $\Br K = 0$ and by a generalized version of Hilbert $90$~\cite[p. 152, Ex. 2]{Serre-LocalFields}, $\HH^1(K, \Kbar^{\times}) = \HH^1(K, \Lbar^{\times}) = 0$. Therefore
			\begin{equation}\label{eq:cohom-LbaroverKbar}
				\HH^0(K, \Lbar^\times/\Kbar^\times) 
				\isom L^\times/K^\times, 
				\quad \textup{ and } 
				\HH^1(K, \Lbar^\times/\Kbar^\times) = 0.
			\end{equation}

		Now consider the short exact sequence
		\[
			0 \to \mu_2(\Lbar)/\mu_2(\Kbar) \to 
			\Lbar^{\times}/\Kbar^{\times} \to
			\Lbar^{\times}/\Kbar^{\times} \to 0,
		\]
		where the last non-trivial map is the squaring map.  After taking group cohomology and applying~\eqref{eq:cohom-LbaroverKbar}, we have
		\begin{equation}\label{eq:H1mu2}
			\HH^1(K, \mu_2(\Lbar)/\mu_2(\Kbar)) \isom 
			L^{\times}/L^{\times2}K^{\times}.
		\end{equation}

		Now we will relate the above cohomology groups to those of $J[2]$.  The Galois module $J[2]$ is generated by differences of certain points of $C$, namely the $P_i := (\alpha_i, 0)$ where $\alpha_i$ is a root of $f(x)$.  Using this fact, we can define a morphism of Galois modules
			\[
				J[2] \hookrightarrow \frac{\mu_2(\Lbar)}{\mu_2(\Kbar)}, 
				\quad P_i - P_j \mapsto e_ie_j,
			\]
		where $e_i$ is identified with the vector with a $-1$ in the $i^{\textup{th}}$ entry, and a $1$ in all other entries, under the isomorphism $\Lbar = L\otimes_K \Kbar \isom \Kbar^4$.  This injection fits into the exact sequence
		\[
			0 \to J[2] \longrightarrow \frac{\mu_2(\Lbar)}{\mu_2(\Kbar)} 
			\stackrel{\textup{Norm}}{\longrightarrow} \mu_2(\Kbar) \to 0.
		\]
		
		Now consider the following diagram, where each row is exact and $\gamma = \infty^+ + \infty^-$.
		\begin{equation}\label{diag:kummer-brauer-short}
			\xymatrix {
				0 \ar[r] 
				& J[2] \ar[r] \ar[d] 
				& \mu_2(\Lbar)/\mu(\Kbar) \ar[r]^-{\N} \ar[d]
				& \mu_2 \ar[r] \ar[d]
				& 0 \\
				0 \ar[r]
				& J \ar[r]
				& \Pic(\overline{C})/\Z\gamma \ar[r]^-{\deg}
				& \Z/2\Z \ar[r]
				& 0\\
				0 \ar[r]
				& J \ar[r] \ar[u]
				& \Pic(\overline{C}) \ar[r]^-{\deg} \ar@{->>}[u]
				& \Z \ar[r] \ar@{->>}[u]
				& 0
			}
		\end{equation}
		After taking group cohomology, taking the $2$-torsion parts, and using~\eqref{eq:H1mu2} we obtain the following commutative diagram, where each row is exact.
		\begin{equation}\label{diag:kummer-brauer-long}
			\xymatrix {
				& \mu_2 \ar[r] \ar[d] 
				& \HH^1(K, J[2]) \ar@{->>}[r] \ar@{->>}[d]
				& \ker (\N\colon L^{\times}/L^{\times2}K^{\times} \to 
					K^{\times}/K^{\times2})
					\ar[d]\\
				& \Z/2\Z \ar[r]
				& \HH^1(K, J)[2] \ar[r] 
				& (\ker (\deg \colon \HH^1(K,
				 \Pic(\overline{C})/\Z\gamma)
				 	\to K^{\times}/K^{\times2}))[2]\\
				& \Z/2\Z \ar[r] \ar@{->>}[u] 
				& \HH^1(K, J)[2] \ar[r] \ar[u]
				& (\Br C)[2]  \ar[u]
			}
		\end{equation}
		The bottom two leftmost arrows are isomorphisms, so we have 
		\[
			\im\left(\HH^1(K, J)[2]) \to (\ker \deg)[2]\right) \isom 
			\im\left(\HH^1(K, J)[2]) \to(\Br C)[2]\right). 
		\]
		By the commutativity of the top right square, and the surjectivity of both maps with domain $HH^1(K, J[2])$, the image of $\ker N \to (\ker \deg)[2]$ coincides with the image of $\HH^1(K, J)[2] \to (\ker \deg)[2]$.  We can now combine everything into the following diagram.
		\begin{equation}
			\xymatrix{
			& \mu_2 \ar[r] \ar[d]
			& \Z/2\Z \ar[d]
			\\
			\frac{J(K)}{2J(K)} \ar @{^{(}->}[r]  \ar [rd]^{x - \alpha}
			& \HH^1(K, J[2])\ar @{->>}[r] \ar @{->>}[d]
			& \HH^1(K, J)[2] \ar[d]\\
			&{{\ker \left(N:\frac{L^{\times}}{L^{\times2}K^{\times}} 
				\to \frac{K^{\times}}{K^{\times2}}\right)\ar[r]}}
			&(\Br C)[2]
			}
		\end{equation}
	
		\begin{remark}
			Note that we have not claimed that two top vertical arrows are injective.  In fact, for some curves $C$ both maps can be trivial.  By calculating the $0^{\textup{th}}$ cohomology of the top row in~\eqref{diag:kummer-brauer-short}, one can show that the map $\mu_2 \to \HH^1(K, J[2])$ is injective if and only if $f(x)$ has no roots in $K$.  Similarly, by using the bottom row of~\eqref{diag:kummer-brauer-short}, one can show that $\Z/2\Z \to \HH^1(K, J)[2]$ is injective if and only if there are no $0$-cycles of degree $1$ defined over $K$.
		\end{remark}
	
	\section{Explicit computations of cohomological maps}\label{sec:def_h}
		In this section, we show that the bottom horizontal map in
		\begin{equation*}
			\xymatrix{
			\frac{J(K)}{2J(K)} \ar @{^{(}->}[r]  \ar [rd]^{x - \alpha}
			& \HH^1(K, J[2])\ar @{->>}[r] {{\ar@{->>}[d]}}
			& \HH^1(K, J)[2] \ar[d]\\
			&
			{{\ker \left(N:\frac{L^{\times}}{L^{\times2}K^{\times}} 
				\to \frac{K^{\times}}{K^{\times2}}\right)\ar[r]}}
			&(\Br C)[2],
			}
		\end{equation*}
		obtained in~\S\ref{sec:cohomology} by cohomological methods agrees with the map $\ell \mapsto \Cor_{\kk(C_L)/\kk(C)}\left( (\ell, x - \alpha)_2\right).$
		
		\begin{lemma}
			The map
			\[
				\ker \left(\N\colon L^{\times}/L^{\times2}K^{\times} \to K^{\times}/K^{\times2}\right) \to \Br \kk(C), \quad \ell \mapsto \Cor_{\kk(C_L)/\kk(C)}\left( (\ell, x - \alpha)_2\right)
			\]
			is well-defined and its image is contained in $\Br C$.
		\end{lemma}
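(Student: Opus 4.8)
The plan is to establish the two assertions separately: first that the formula $\ell \mapsto \Cor_{\kk(C_L)/\kk(C)}\big((\ell, x-\alpha)_2\big)$ descends to a well-defined map on $L^{\times}/L^{\times2}K^{\times}$ landing in $\Br\kk(C)$, and then that its restriction to $\ker N$ produces only unramified classes.

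For well-definedness I would argue in two steps. Each symbol $(\ell, x-\alpha)_2$ is a class in $\Br\kk(C_L)$, and corestriction sends it to $\Br\kk(C)$, so I only need independence of the choice of representative. Since $(\ell m^2, x-\alpha)_2 = (\ell, x-\alpha)_2$ for any $m\in L^{\times}$, the symbol depends only on $\ell$ modulo $L^{\times2}$. It then remains to show that replacing $\ell$ by $\ell c$ with $c\in K^{\times}$ does not change the corestriction. Writing $c$ also for its image in $\kk(C_L)^{\times}$, bilinearity gives $(\ell c, x-\alpha)_2 = (\ell, x-\alpha)_2 + (c, x-\alpha)_2$, so it suffices to kill $\Cor_{\kk(C_L)/\kk(C)}\big((c, x-\alpha)_2\big)$. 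Here the projection formula applies, because $c$ is restricted from $\kk(C)$: it yields $\big(c, \N_{\kk(C_L)/\kk(C)}(x-\alpha)\big)_2$. The norm $\N_{\kk(C_L)/\kk(C)}(x-\alpha) = \prod_i(x-\alpha_i) = f(x)/a$ (with $a$ the leading coefficient of $f$) equals $y^2/a$ on $C$, a square times the constant $a^{-1}$. The symbol therefore reduces to $(c,a)_2 \in \Br K$, which vanishes by Tsen's theorem. This proves well-definedness on all of $L^{\times}/L^{\times2}K^{\times}$.

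For the second assertion I would test unramifiedness via the residue maps $\partial_v\colon (\Br\kk(C))[2] \to \kappa(v)^{\times}/\kappa(v)^{\times2}$ at the closed points $v$ of $C$, since $\Br C$ is cut out by the vanishing of all residues. As $L/K$ is \'etale, $\kk(C_L)/\kk(C)$ is unramified at every such $v$, so residues commute with corestriction: $\partial_v \circ \Cor_{\kk(C_L)/\kk(C)} = \sum_{w \mid v} \Cor_{\kappa(w)/\kappa(v)} \circ \partial_w$, where $w$ ranges over the points of $C_L$ above $v$ and corestriction on degree-$1$ cohomology is the norm on residue fields. The decisive simplification is that $\ell$ is a \emph{constant}, so $w(\ell)=0$ at every geometric valuation and the tame symbol collapses to $\partial_w\big((\ell, x-\alpha)_2\big) = \overline{\ell}^{\,w(x-\alpha)}$ modulo $\kappa(w)^{\times2}$, which is trivial whenever $w(x-\alpha)$ is even. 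Computing $\divv(x-\alpha) = 2P - \gamma$ on each factor of $C_L$ (a double zero at $P=(\alpha_i,0)$ and simple poles at the two points of $\gamma = \infty^+ + \infty^-$), the valuation $w(x-\alpha)$ is odd only at the points above $\gamma$, where it equals $-1$; at the $y=0$ points it is $2$ and everywhere else $0$, so those residues are squares.

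Finally, at a closed point $v$ of $C$ lying under $\gamma$, the summands $\partial_w\big((\ell,x-\alpha)_2\big) \equiv \overline{\ell}$ are assembled by corestriction into the image of $\N_{L/K}(\ell)$ in $\kappa(v)^{\times}/\kappa(v)^{\times2}$, using that norms commute with the base change from $K$ to $\kappa(v)$. This is trivial precisely when $\N_{L/K}(\ell)\in K^{\times2}$, i.e.\ when $\ell \in \ker N$ --- exactly our hypothesis. Together with the vanishing at the $y=0$ points and at all other places, every residue of $\Cor_{\kk(C_L)/\kk(C)}\big((\ell,x-\alpha)_2\big)$ is trivial, so the class lies in $\Br C$. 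The step I expect to be most delicate is this corestriction--residue compatibility in the relative setting: one must justify that $\kk(C_L)/\kk(C)$ is unramified at the relevant valuations (which follows from \'etaleness of $L/K$, giving $e=1$ and the clean transfer formula) and, when $L$ is not a field, carefully match the points of $C_L$ over $\gamma$ with the factors of $L$ and their residue fields so that the corestriction genuinely reproduces $\N_{L/K}$.
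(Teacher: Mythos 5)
Your proposal is correct and follows essentially the same route as the paper: well-definedness via bilinearity, the projection formula, and the identity $\N_{\kk(C_L)/\kk(C)}(x-\alpha) = f(x)/a \equiv y^2$ up to constants killed by Tsen's theorem; then unramifiedness via the compatibility of residues with corestriction, the observation that $w(\ell)=0$ and $w(x-\alpha)$ is even at the zeros of $x-\alpha$, and the identification of the residue at infinity with $\N_{L/K}(\ell^{-1})$, a square by hypothesis. The only cosmetic differences are that you track the leading coefficient of $f$ explicitly (the paper tacitly treats $f$ as monic) and you quote the residue--corestriction compatibility from \'etaleness of $L/K$, where the paper derives it from a commutative cube in Milnor $K$-theory together with Merkurjev--Suslin and purity.
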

		\begin{proof}
			Throughout this proof $\Cor$ will denote $\Cor_{\kk(C_L)/\kk(C)}$, unless another field extension is specified.  Let $m \in K^{\times}$ and $\ell, \ell_0 \in L^{\times}$, and consider $\calA := \Cor\left( (\ell\cdot\ell_0^2\cdot m, x - \alpha)_2\right)$.  We may expand $\calA$ as follows
			\begin{align*}
				\calA & = \Cor\left( (\ell, x - \alpha)_2\right) 
					+ \Cor\left( (\ell_0^2, x - \alpha)_2\right) 
					+ \Cor\left( (m, x - \alpha)_2\right)\\
				& = \Cor\left( (\ell, x - \alpha)_2\right) 
					+ (m, \Cor(x - \alpha))_2\\
				& = \Cor\left( (\ell, x - \alpha)_2\right) +  (m, f(x))_2\\
				& = \Cor\left( (\ell, x - \alpha)_2\right) +  (m, y^2)_2 \quad 
				= \quad \Cor\left( (\ell, x - \alpha)_2\right).
			\end{align*}
			This proves that the map is well-defined.
			
			Now consider the following diagram:
			\begin{equation}\label{cube1}
			   \xymatrix {
			    K^M_2(\kk(C_L)) 
					\ar[dd]^{\N_{\kk(C_L)/\kk(C)}} 
					\ar[rr]^{\oplus \partial^M_w} 
					\ar[dr]^{h^2_{\kk(C_L),2}} & & 
				\bigoplus_{w|v}K^M_1(\kappa(w)) 		
					\ar[dd]|\hole^(.7){\sum_{w|v}\N_{\kappa(w)/\kappa(v)}} 	
					\ar[dr]^{\oplus h^1_{\kappa(w),2}}\\
			    & \HH^2(\kk(C_L),\mu_2^{\otimes2})  
					\ar[rr]^(.3){\oplus \partial^2_w} 
					\ar[dd]^(.7){\Cor_{\kk(C_L)/\kk(C)}} & & 
				{\bigoplus_{w|v}\HH^1(\kappa(w), \mu_2)} 
					\ar[dd]^{\sum_{w|v}\Cor_{\kappa(w)/\kappa(v)}}  \\
			    K^M_2(\kk(C))
					\ar[rr]^(.7){\partial^M_v}|\hole 
					\ar[dr]^{h^2_{k(X),2}} & & K^M_1(\kappa(v)) 	
					\ar[dr]^{h^1_{\kappa(v),2}}\\
			    & \HH^2(\kk(C), \mu_2^{\otimes2}) 
					\ar[rr]^{\partial^2_v}  & & 
				\HH^1(\kappa(v), \mu_2) 
			   }
			\end{equation}
			The back, side, top, and bottom squares are all commutative~\cite[Cor. 7.4.3 and Prop. 7.5.1 \& 7.5.5]{GS-csa}.  Therefore, all ways of traversing from $K^M_2(\kk(C_L))$ to $\HH^1(\kappa(v), \mu_2)$ are equivalent.  By the Merkurjev-Suslin theorem\cite[Thm. 4.6.6]{GS-csa}, $h^2_{\kk(C_L),2}$ is surjective so the front square is commutative.  Using the purity theorem\cite{Fujiwara-purity} and the commutativity of the front and right square of~\ref{cube1}, we obtain the following commutative diagram where the bottom row is exact.

			\begin{equation}\label{diag:purity-cores}
					\begin{CD}
						&&&&\Br \kk(C_L)[2] 
						@>>\oplus_w \partial^2_w> \bigoplus_{v} 
						\left(\bigoplus_{w\mid v} 
						\kappa(w)^{\times}/\kappa(w)^{\times2}\right)\\
						&&&&@VV\operatorname{Cor}_{\kk(C_L)/\kk(C)}V 
						@VV\prod_{w\mid v}
						{\operatorname{N}_{\kappa(w)/\kappa(v)}}V\\
						0 @>>> \Br C[2] @>>> \Br \kk(C)[2] 
						@>>\oplus_v \partial^2_v>
						\bigoplus_{v}\kappa(v)^{\times}/\kappa(v)^{\times2}
					\end{CD}
			\end{equation}
			(Here, the direct sum $\oplus_v$ ranges over all valuations corresponding to prime divisors on $C$, and $\oplus_{w|v}$ ranges over all valuations corresponding to prime divisors in $C_L$ lying over $v$.)
			
			Thus, to show that $\Cor\left((\ell, x - \alpha)_2\right)$ is in $\Br C$, it suffices to show that
			\begin{equation}\label{eqn:unramified}
				\prod_{w|v} \Cor_{\kappa(w)/\kappa(v)} \left((-1)^{w(\ell)w(x - \alpha)}
				\ell^{w(x- \alpha)} (x - \alpha)^{w(\ell)}\right)
			\end{equation}
			is a square in $\kappa(v)^{\times}$, for all valuations $v$.  Since $\ell$ is a constant in $\kk(C_L)$, the valuation $w(\ell) = 0$ for all $w$.  If $w(x - \alpha) = 0$ for all $w\mid v$ then~\eqref{eqn:unramified} is 1, and hence a square.  We restrict our attention to valuations $v$ such that there exists a $w\mid v$ with $w(x - \alpha)\neq 0$. 
			
			For all valuations $w$ such that $w(x - \alpha)$ is positive, we have that valuation $w(x - \alpha) \equiv 0 \pmod 2 $ so~\eqref{eqn:unramified} is clearly a square.  Now we consider the valuations $v$ for which there exists a $w\mid v$ with $w(x - \alpha) <0$.  These valuation(s) $v$ correspond to the points at infinity on $C$, and for every $w\mid v$ we have that $w(x - \alpha) = -1$.  In this case~\eqref{eqn:unramified} can be simplified to
			\[
				\prod_{w|v} \Cor_{\kappa(w)/\kappa(v)} \left((-1)^{w(\ell)w(x - \alpha)}
				\ell^{w(x- \alpha)} (x - \alpha)^{w(\ell)}\right) = \prod_{w|v} \Cor_{\kappa(w)/\kappa(v)} \left(\ell^{-1}\right) = \Norm_{L/K}(\ell^{-1}).
			\]
			Since, by assumption, $\ell \in \ker \left(\N\colon L^{\times}/L^{\times2}K^{\times} \to K^{\times}/K^{\times2}\right)$, this is a square.\end{proof}
			
		\begin{thm}
			For any $K$, $C$, as above, the following diagram commutes
			\begin{equation}\label{eq:diag}
				\xymatrix{
					\HH^1(K, J[2])\ar[r]\ar[d] & \HH^1(K, J)[2]\ar[d]\\
					\ker \left(\N\colon L^{\times}/L^{\times2}K^{\times} 
					\to K^{\times}/K^{\times2}\right) \ar[r]^(.7)h & \Br_1 C_K\\
				}
			\end{equation}
			where $h(\ell) = \Cor_{k(C_L)/k(C)}\left(\ell, x - \alpha\right).$
		\end{thm}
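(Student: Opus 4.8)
The plan is to prove commutativity inside $\Br\kk(C)$ and to reduce everything to the generic point. Since $\Br C\subseteq\Br\kk(C)$ and the Hochschild--Serre identification $\Br_1 C\isom\HH^1(K,\Pic\overline{C})$ is natural, the right-hand composite $\HH^1(K,J)[2]\to\Br_1 C\hookrightarrow\Br\kk(C)$ is evaluation at the tautological $\kk(C)$-point $P_0$ of $C$, namely the generic point viewed as a $\kk(C)$-rational point, so that pullback along $P_0\colon\Spec\kk(C)\to C$ recovers the standard injection $\Br C\hookrightarrow\Br\kk(C)$. The map $h$ is already intrinsically a class in $\Br\kk(C)$, built by corestriction from $\kk(C_L)$, and it depends on $\xi$ only through its image $\ell_\xi$. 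Hence it suffices to show: for $\xi\in\HH^1(K,J[2])$ with image $\eta\in\HH^1(K,J)[2]$ and $\ell_\xi\in\ker N$, the image of $\eta$ in $\Br\kk(C)$ agrees with $\Cor_{\kk(C_L)/\kk(C)}\bigl((\ell_\xi,x-\alpha)_2\bigr)$.

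First I would make the left vertical map explicit at the cocycle level. By Shapiro's lemma the isomorphism $\HH^1(K,\mu_2(\Lbar)/\mu_2(\Kbar))\isom L^\times/L^{\times2}K^\times$ of \eqref{eq:H1mu2} sends the image of a cocycle $\xi$ to a representative $\ell_\xi\in L^\times$, and the embedding $J[2]\hookrightarrow\mu_2(\Lbar)/\mu_2(\Kbar)$, $P_i-P_j\mapsto e_ie_j$, records exactly the functions $x-\alpha_i$, whose divisor on $\overline{C}$ is $\divv(x-\alpha_i)=2P_i-\gamma$. This is the bridge between the abstract module $\mu_2(\Lbar)/\mu_2(\Kbar)$ and the geometry of $C$: the middle vertical map $\mu_2(\Lbar)/\mu_2(\Kbar)\to\Pic(\overline{C})/\Z\gamma$ of \eqref{diag:kummer-brauer-short} carries $e_i$ to the class of $P_i$, which is $2$-torsion precisely because $2P_i-\gamma$ is principal, cut out by $x-\alpha_i$.

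The core step is then to identify the Hochschild--Serre map, evaluated at $P_0$, with a cup product. I would pull the three rows of \eqref{diag:kummer-brauer-short} back to $\kk(C)$ and map them into the Kummer sequences $0\to\mu_2\to\G_m\xrightarrow{2}\G_m\to0$ on $C$ and on $C_L$, using the functions $x-\alpha_i$ as the comparison map, so that the connecting homomorphism attached to a cocycle of $\xi$ becomes the cup product with the class of $x-\alpha$ in $\HH^1(\kk(C_L),\mu_2)$. Under this comparison the norm map $N\colon\mu_2(\Lbar)/\mu_2(\Kbar)\to\mu_2$ of \eqref{diag:kummer-brauer-short} matches corestriction $\Cor_{\kk(C_L)/\kk(C)}$ --- the same compatibility already exploited in the cube \eqref{cube1} of the previous lemma --- which produces the outer corestriction in the formula for $h$. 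The algebraic shadow of this matching is the identity $\Norm_{\kk(C_L)/\kk(C)}(x-\alpha)=f(x)=y^2$, a square in $\kk(C)$, which is exactly why the construction descends to $\kk(C)$ and why the hypothesis $\ell_\xi\in\ker N$ is the relevant one.

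I expect the main obstacle to be the explicit, cocycle-level description of the isomorphism $\Br_1 C\isom\HH^1(K,\Pic\overline{C})$ and of its composite with evaluation at $P_0$: one must choose lifts of a $J[2]$-valued cocycle to $\mu_2(\Lbar)/\mu_2(\Kbar)$ and to $\Pic(\overline{C})$, and keep track of the two degree columns ($\mu_2\to\Z/2\Z$ and $\Z\to\Z/2\Z$, i.e.\ the $\gamma$-contributions) so that the lifting ambiguities cancel. Once the Hochschild--Serre map is pinned down as cup product with $x-\alpha$ followed by corestriction, commutativity of \eqref{eq:diag} follows from naturality of cup products together with the corestriction--norm compatibility; and since the left vertical map is surjective onto $\ker N$, this simultaneously shows that the explicit $h$ coincides with the cohomologically defined bottom map of \S\ref{sec:cohomology}.
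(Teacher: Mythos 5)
Your strategy --- computing both composites directly in $\Br\kk(C)$ and identifying the cohomologically defined map with cup product by $x-\alpha$ followed by corestriction --- is a genuinely different route from the paper's, but as written it has a gap at exactly the decisive point. The identity you call the ``core step'' (that the edge map $\HH^1(K,J)[2]\to\Br_1 C\subseteq\Br\kk(C)$, precomposed with $\HH^1(K,J[2])\to\HH^1(K,J)[2]$, is computed by $\xi\mapsto\Cor_{\kk(C_L)/\kk(C)}\bigl((\ell_\xi,x-\alpha)_2\bigr)$) \emph{is} the theorem; you defer it as ``the main obstacle'' and then assert that, once it is pinned down, commutativity follows from naturality. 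Naturality of cup products and norm--corestriction compatibility do not by themselves produce this identity: one must actually compute the edge map on cochains (lift a $\Pic(\Cbar)$-valued, resp.\ $\Pic(\Cbar)/\Z\gamma$-valued cochain to divisors, take coboundaries into principal divisors, choose functions realizing them, and check that the lifting ambiguities cancel), and that computation is where all the work lies. Moreover, the specific mechanism you propose --- mapping the rows of \eqref{diag:kummer-brauer-short} into Kummer sequences ``using the functions $x-\alpha_i$ as the comparison map'' --- does not literally exist over $K$: the $x-\alpha_i$ are permuted by $G_K$ (the whole point of the paper is that the $2$-torsion is \emph{not} rational), so your comparison is not a morphism of sequences of $G_K$-modules. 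It only becomes one after base change to the splitting field of $f$, or after recasting everything in terms of induced modules and invoking the projection formula relating cup products, restriction, and corestriction; neither step is carried out in the proposal.

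It is worth contrasting this with how the paper resolves exactly this difficulty, because its two ingredients are what your sketch is missing. First, it proves commutativity in the split case, where $J[2]$ is rational and your Galois-equivariance problem disappears, by citing the known explicit formula $(d_1,d_2)\mapsto(d_1,X-a_1)_2+(d_2,X-a_2)_2$ from Skorobogatov's book and matching it against the explicit bottom map via quaternion-algebra identities and Tsen's theorem; this is the cocycle-level computation you would otherwise have to redo from scratch. Second --- and this is the bulk of the paper's proof --- it descends from the splitting field $\Ktilde$ to $K$ through a cube of corestriction maps, which requires proving that $\Cor\colon\HH^1(\Ktilde,J[2])\to\HH^1(K,J[2])$ is surjective; that surjectivity is itself a substantial argument (splitting $\Ktilde/K$ into three subextensions and using Tsen's theorem, Shapiro's lemma, inflation--restriction, and Tate cohomology of a cyclic group). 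Your proposal contains no substitute for either ingredient, so the plan as it stands does not close.
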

		\begin{proof}
			First we show that if $f(x)$ splits completely over $K$~\eqref{eq:diag} commutes.  After making a change of variables on $\PP^1_{(x:z)}$, we may assume that $f(x) = (1- a_1x)(1 - a_2x)(1 - a_3x)x$, for some $a_1, a_2, a_3 \in K$.  Under these assumptions $C$ has a Weierstrass model: 
			\[
				Y^2 = (X - a_1)(X - a_2)(X - a_3).
			\]
			By~\cite[\S4.1, 4.4, Exer. 1\&2]{Skorobogatov-torsors}, the composition of the top and right map $\HH^1(K, J[2]) \to \Br_1 C$ is given by $(d_1, d_2) \mapsto (d_1, X - a_1)_2 + (d_2, X - a_2)_2$, where $(d_1, d_2)$ corresponds to the cocycle $\sigma \mapsto \frac{\sqrt{d_1} - \sigma(\sqrt{d_1})}{\sqrt{d_1}}(a_1, 0) + \frac{\sqrt{d_2} - \sigma(\sqrt{d_2})}{\sqrt{d_2}}(a_2, 0)$.  By tracing through the maps given in~\S\ref{sec:cohomology}, we see that the bottom left map is given by
			\[
				(d_1, d_2) \mapsto (d_1, d_2, 1, d_1d_2) \mapsto 
				(d_1, x - 1/a_1)_2 + (d_2, x - 1/a_2)_2 + (1, x - 1/a_3)_2 + 
				(d_1d_2, x)_2
			\]
			Using the relation $x = 1/X$ and other relations in the Brauer group, we can rewrite 
			\begin{align*}
				& (d_1, x - 1/a_1)_2 + (d_2, x - 1/a_2)_2 + (1, x - 1/a_3)_2 + 
			(d_1d_2, x)_2\\
				& =  \left(d_1, \frac{X - a_1}{-a_1X}\right)_2 
				+ \left(d_2, \frac{X - a_2}{-a_2X}\right)_2 + (d_1d_2, X)_2\\
				& =  (d_1, X - a_1)_2 + (d_1, -a_1)_2 (d_1, X)_2 + 
				(d_2, X - a_2)_2 + (d_2, -a_2)_2 (d_2, X)_2 + (d_1d_2, X)\\
				& =  (d_1, X - a_1) + (d_2, X - a_2) + (d_1, -a_1) + (d_1, -a_2)
			\end{align*} 
			By Tsen's theorem,  $(d_1, -a_1), (d_1, -a_2)$ are trivial quaternion algebras.  Thus, if $f(x)$ is a product of linear factors then~\eqref{eq:diag} commutes.
								
			Now we prove the theorem in the general case.  Let $\Ktilde$ be the splitting field of $f(x)$.  We will show that if~\eqref{eq:diag} commutes over $\Ktilde$, then it commutes over $K$.  Consider the following diagram, where $\Ltilde := L\otimes_K\Ktilde$:
			\begin{equation}\label{cube}
			   \xymatrix {
			    \HH^1(\Ktilde, J[2]) \ar[dd] \ar[rr] \ar[dr]^{\Cor} & & 
				\HH^1(\Ktilde, J)[2] \ar[dd]|\hole \ar[dr]^{\Cor}\\
			    & \HH^1(K, J[2]) \ar[rr] \ar[dd] & & 
				\HH^1(K, J)[2] \ar[dd]  \\
		    	\ker \N_{\Ltilde/\Ktilde} \ar[rr]|\hole \ar[dr]^{\Cor} & & 
				\Br C_{\Ktilde}	\ar[dr]^{\Cor}\\
			    & \ker \N_{L/K} \ar[rr]  & & 
				\Br C 
			   }
			\end{equation}
			The top, bottom, and side squares are commutative since corestriction commutes with cohomological maps. Since we have just proved that the back square is commutative, all paths traversing from $\HH^1(\Ktilde, J[2])$ to $\Br_1 C$ are equivalent.  Hence, if we show that $\Cor\colon \HH^1(\Ktilde, J[2]) \to \HH^1(K, J[2])$ is surjective, then the front square is commutative.  
		
			We will do this by breaking the extension $\Ktilde/K$ into $3$ parts.  Let $K_1$ be the minimal field extension of $K$ such that $J(K_1)[2] \supseteq \Z/2\Z$ and let $K_2$ be the minimal field extension of $K_1$ such that $J[2] \subseteq J(K_2)$.  We will consider each part of the composition
			\[
				\HH^1(\Ktilde, J[2]) 
				\stackrel{\Cor}{\longrightarrow}
				\HH^1(K_2, J[2])
				\stackrel{\Cor}{\longrightarrow}
				\HH^1(K_1, J[2])
				\stackrel{\Cor}{\longrightarrow}
				\HH^1(K, J[2])
			\]
			separately.  
			
			Our conditions on $K_1$ imply that $[K_1:K]\mid 3$, so $\Cor\colon \HH^1(K_1, J[2]) \to \HH^1(K, J[2])$ is surjective.  Since $J[2] \subseteq J(K_2) \subseteq J(\Ktilde)$, The cohomology groups $\HH^1(\Ktilde, J[2]), \HH^1(K_2, J[2])$ are isomorphic to $(\Ktilde^{\times}/\Ktilde^{\times2})^2$ and $(K_2^{\times}/K_2^{\times2})^2$ respectively.  Under this isomorphism, Corestriction agrees with the norm map.  Thus by applying Tsen's theorem to $K_2$, we see that this map is surjective.

			It remains to prove that $\Cor\colon \HH^1(K_2, J[2]) \to \HH^1(K_1, J[2])$ is surjective.  If $K_1 = K_2$ then this is clear.  Assume that $K_2 \neq K_1$.  Fix $\tau\in G_{K_1}\setminus G_{K_2}$, and let $D_1, D_2$ be two linearly independent divisors in $J[2]$ such that $\tau(D_1) = D_2$.  Let $M = \left(\Z/2\Z\right)^4$ as an abelian group, and define a $G_{K_1}$ action on $M$ by
			\[
				\sigma\cdot(n_1, n_2, n_3, n_4) = 
					\begin{cases}
						(n_1,n_2,n_3,n_4) & \textup{if }\sigma\in G_{K_2}\\
						(n_3,n_4,n_1,n_2) & \textup{if }\sigma \notin G_{K_2}
					\end{cases}
			\]
			With this action, the map
			\[
				(n_1, n_2, n_3, n_4) \mapsto 
				\phi \colon G_{K_1} \to J[2], \quad 
				\phi(\sigma) = 
					\begin{cases}
						n_1D_1 + n_2D_2, & \textup{if }\sigma\in G_{K_2}\\
						n_3D_1 + n_4D_2, & \textup{if }\sigma\notin G_{K_2}
					\end{cases}
			\]
			gives an isomorphism of Galois modules $M\isom\Ind^{K_1}_{K_2}(J[2])$.  We fix the following isomorphism of $G_{K_1}$-modules
			\begin{align*}
				J[2] \stackrel{\sim}{\to} &\Ind^{K_1}_{K_2}(\Z/2\Z)\\
				a_1D_1 + a_2D_2 \mapsto& \phi\colon G_{K_1}\to\Z/2\Z, \quad
				\phi(\sigma) = 
					\begin{cases}
						a_1 &\textup{if } \sigma\in G_{K_2}\\
						a_2 & \textup{if }\sigma\notin G_{K_2}
					\end{cases}
			\end{align*}
			Consider the following composition
			\[
			\begin{array}{c}
				\HH^1({K_2}, J[2])\stackrel{\sim}{\longrightarrow}
				\HH^1({K_1}, \Ind^{K_1}_{K_2}(J[2])) 
				\stackrel{\textup{res}}{\longrightarrow}
				\HH^1({K_2}, M)
				\stackrel{{(\pi_1,\pi_4)}}{\longrightarrow}
				\HH^1({K_2}, (\Z/2\Z)^2)\\
				\stackrel{\sum}{\longrightarrow}
				\HH^1({K_2}, (\Z/2\Z))
				\stackrel{\sim}{\longrightarrow}
				\HH^1({K_1}, J[2]),
			\end{array}
			\]
			where the first and last map are the isomorphisms from Shapiro lemma.  By computing the map on cocycles, we see that this composition agrees with $\Cor$.  It is clear that the fourth map $\sum$ is surjective; thus, in order to prove that $\Cor$ is surjective, it suffices to show that $(\pi_1,\pi_4)\circ\textup{res}$ is surjective.  The inflation-restriction exact sequence shows that the cokernel of $\textup{res}\colon \HH^1({K_1}, \Ind^{K_1}_{K_2}(J[2])) \to\HH^1({K_2}, M)^{G_{K_1}/G_{K_2}}$ injects into $\HH^2(G_{K_1}/G_{K_2}, \Ind^{K_1}_{K_2}(J[2]))$.  Using Tate groups, we see that this is $0$, so Restriction surjects onto $\HH^1({K_2}, M)^{G_{K_1}/G_{K_2}}$.
			We identify $\HH^1({K_2}, M)$ with $\left(K_2^{\times}/K_2^{\times2}\right)^4$; under this identification, the order $2$ quotient group $G_{K_1}/G_{K_2} = \langle \tau\rangle$ acts as 
			\[
				\tau\cdot(\alpha_1,\alpha_2,\alpha_3, \alpha_4) 
				= (\alpha_3, \alpha_4,\alpha_1,\alpha_2).
			\]
			Thus, Restriction surjects onto $\left\{(\alpha_1, \alpha_2,\alpha_1,\alpha_2) : \alpha_1,\alpha_2 \in K_2^{\times}\right\}\subset \left(K_2^\times/K_2^{\times2}\right)^4$.  From this description it is clear that $(\pi_1,\pi_4)\circ\textup{res}$ is surjective, which completes the proof.
		\end{proof}		
	
	\section{An analogue of the Selmer group}\label{sec:Selmer}
	
		In this section, we define the {unramified} part of $\ker N$, show that it is finite, and show that $h^{-1}\left(\Br X\right) \subseteq \left(\ker N\right)_{S\textup{-unr}}$.  In order to define when an element $\ell\in L$ is {unramified}, we first fix some notation.
		
		The degree $4$ $K$-algebra $L$ can be decomposed as a direct product of fields $L = \prod_i L_i$.  Each $L_i$ has transcendence degree $1$, so each $L_i$ is the function field of a smooth projective geometrically integral curve, $Z_i$.  Let $Z = \bigcup Z_i$; then $\kk(Z) = L$.  Since $L$ is a degree $4$ algebra over $K$, we have a degree $4$ map $\varpi\colon Z\to W$.  Note that we also have a surjective map $Z\to \overline{V(y)} \subseteq X$, where $\overline{V(y)}$ denotes the Zariski closure of $V(y)\subseteq C$ in $X$.  The $4:1$ cover $\varpi$ agrees with the composition $Z \to \overline{V(y)} \stackrel{\pi}{\to} W$.
	
		Let $S\subset W$ be a finite set of points that contains all of the places where the model $y^2 = f(x)$ has bad reduction.  By definition of $Z$ and $\varpi$, S contains all points of $W$ where the map $\varpi\colon Z\to W$ fails to be smooth.  We say an element $\ell \in L^{\times}$ is \defi{S-unramified} if for all $t \in W\setminus S$ and for all $P,Q\in Z_t$, we have $v_P(\ell) \equiv v_Q(\ell) \pmod 2$.  Note that $\ell$ is $S$-unramified if and only if $\ell\ell'$ is $S$-unramified for any $\ell' \in L^{\times2}K^{\times}$, thus we may consider $S$-unramified elements of the quotient $L^{\times}/L^{\times2}K{\times}$.  We denote the subgroup of $\ker \N$ that consists of all $S$-unramified elements by $\left(\ker \N\right)_{S\textup{-unr}}$.
			
		\begin{thm}\label{thm:finiteness}
			The group $\left(\ker \N\right)_{S\textup{-unr}}$ is finite.  In addition, $h^{-1}\left(\Br X\right) \subseteq (\ker \N)_{S\textup{-unr}}$, so $h^{-1}\left(\Br X\right)$ is finite.
		\end{thm}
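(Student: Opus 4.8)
The plan is to prove the two assertions in turn: first the finiteness of $\left(\ker\N\right)_{S\textup{-unr}}$, and then the inclusion $h^{-1}(\Br X)\subseteq\left(\ker\N\right)_{S\textup{-unr}}$, from which the finiteness of $h^{-1}(\Br X)$ is immediate. For the finiteness I would study $S$-unramified classes through the divisor map modulo squares. Writing $L=\prod_i L_i$ with $L_i=\kk(Z_i)$, consider $d\colon L^\times/L^{\times2}\to\Div(Z)/2\Div(Z)$, $\ell\mapsto(v_P(\ell)\bmod 2)_P$. Since $\kk$ is algebraically closed of characteristic $0$, both $\kk^\times$ and each $\Pic^0(Z_i)$ are $2$-divisible, so the Kummer sequences attached to $1\to\kk^\times\to L_i^\times\to\Princ(Z_i)\to 0$ and $0\to\Princ(Z_i)\to\Div(Z_i)\to\Pic(Z_i)\to 0$ identify $\ker d$ with $\Pic(Z)[2]=\prod_i\Pic^0(Z_i)[2]$, which is finite. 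The map $d$ descends to $L^\times/L^{\times2}K^\times$ after composing with the projection $\Div(Z)/2\to(\Div(Z)/2)\big/\varpi^*(\Div(W)/2)$, since $d(\varpi^*g)$ lands in the image of $\varpi^*$.

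On the $S$-unramified subgroup, I claim the descended map has finite image and finite kernel. Because $\varpi$ is \'etale over $W\setminus S$, one has $\varpi^*[t]=\sum_{P\mid t}[P]$, so the constant-parity condition away from $S$ says exactly that the image is $\varpi^*$ of a class on $W$; modulo $\varpi^*(\Div(W)/2)$ only the part supported on the finite set $\varpi^{-1}(S)$ survives, which lies in a finite group. For the kernel, an $\ell$ with $d(\ell)\in\varpi^*(\Div(W)/2)$ can, after subtracting $\varpi^*g$ for a suitable $g\in K^\times$ (possible once the degree is even, using the $2$-divisibility of $\Pic^0(W)$), be arranged to have $d(\ell)=0$, hence to represent a class in $\ker d=\Pic(Z)[2]$; the remaining degree ambiguity contributes at most a factor $\Pic(W)/2\cong\Z/2\Z$. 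Thus $\left(\ker\N\right)_{S\textup{-unr}}$ is finite.

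For the inclusion I would compute the residues of $h(\ell)$ at the vertical prime divisors of $X$ and show their vanishing is equivalent to the $S$-unramified condition. Fix $t\in W\setminus S$, let $v$ be the corresponding vertical divisor, so $X_t$ is a smooth genus $1$ curve over $\kk$ whose points over $t$ in $Z$ are the roots $\bar\alpha_j$ of the reduced quartic $\bar f$. Invoking the corestriction–residue–norm compatibility of~\eqref{cube1}, now applied to the surface via $X\times_W Z\to X$, and noting that each residue field $\kappa(w)$ of a vertical divisor of $X\times_W Z_i$ over such a point equals $\kk(X_t)=\kappa(v)$ (here $\kk$ is algebraically closed, so the norms are trivial), the residue of $h(\ell)$ at the fiber over $t$ is $\prod_{P\mid t}\left(x-\alpha(P)\right)^{v_P(\ell)}=\prod_j(x-\bar\alpha_j)^{v_{P_j}(\ell)}\in\kk(X_t)^\times/\kk(X_t)^{\times2}$, using that $\ell$ is a vertical constant (so $w(\ell)=v_P(\ell)$) while $w(x-\alpha)=0$. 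Since $\divv(x-\bar\alpha_j)=2\bar P_j-\gamma$, the only multiplicative relation among the $[x-\bar\alpha_j]$ in $\kk(X_t)^\times/\kk(X_t)^{\times2}$ is $\prod_j(x-\bar\alpha_j)\in\kk(X_t)^{\times2}$; hence $\prod_j(x-\bar\alpha_j)^{m_j}$ is a square if and only if all $m_j$ are congruent mod $2$. Applying this with $m_j=v_{P_j}(\ell)$ shows $h(\ell)\in\Br X$ forces $v_P(\ell)\equiv v_Q(\ell)\pmod 2$ for all $P,Q\in Z_t$ and all $t\notin S$, i.e.\ $\ell\in\left(\ker\N\right)_{S\textup{-unr}}$.

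I expect the main obstacle to be this residue computation at the vertical divisors: one must carefully transport the corestriction–residue compatibility from the curve $C$ to the surface $X$, verify that the relevant residue-field extensions are trivial so the norms disappear, and isolate the genus $1$ square criterion that converts ``unramified on $X$'' into the parity condition defining $\left(\ker\N\right)_{S\textup{-unr}}$. The finiteness statement, by contrast, is a fairly routine consequence of the $2$-divisibility of $\kk^\times$ and of $\Pic^0$ once the divisor map is in place.
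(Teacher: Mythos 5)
Your proposal is correct and follows essentially the same route as the paper: the inclusion $h^{-1}(\Br X)\subseteq(\ker\N)_{S\textup{-unr}}$ is proved by computing residues of $\Cor\left((\ell,x-\alpha)_2\right)$ at vertical divisors over $t\notin S$ via the corestriction--residue--norm compatibility (with trivial norms, since $\kk$ is algebraically closed), and ruling out squareness using the nontrivial $2$-torsion classes $\bar P_i-\bar P_j$ in $\Pic(X_t)$, exactly as in the paper. Your finiteness argument is phrased as a kernel--image analysis of the divisor-mod-$2$ map rather than the paper's explicit step-by-step adjustment of $\ell$ by elements of $K^\times$ and functions $\ell_{P,Q}$ supported over $S$, but it rests on the same ingredients ($2$-divisibility of $\Jac(W)$, finiteness of $\Pic(Z)[2]$, and finiteness of $\varpi^{-1}(S)$), so it is the same proof in a cleaner packaging.
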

		\begin{proof}
			
			We first prove the containment $h^{-1}\left(\Br X\right) \subseteq (\ker \N)_{S\textup{-unr}}$; then we will prove finiteness.
			
			The purity theorem states that
			\[
				0 \to \Br X \to \Br C \to 
				\bigoplus_{\substack{ V \in X^{(1)}\\V \textup{ vertical }}} 
				\HH^1(\kappa(V), \Q/\Z)
			\]
			is exact.  Let $\ell \in  (\ker \N)\setminus(\ker \N)_{S\textup{-unr}}$.  Then there is a point $t_0 \in W\setminus S$ and $P,Q \in Z_{t_0}$ such that $X_{t_0}$ is a smooth genus $1$ curve and $v_P(\ell) \not\equiv v_Q(\ell) \pmod 2$.  We will show that $\partial_{X_{t_0}}(\Cor(\ell, x-\alpha))$ is nonzero, thus showing that $\ell \notin h^{-1}\left(\Br X\right)$.
			
			Due to the smoothness of $X_{t_0}$, $Z_{t_0} = \{ P_i(t_0) = (\alpha_i(t_0), 0) : i = 1,2,3,4\}$.  Since $\N(\ell) \in K^{\times2}$, after appropriate scaling by an element in $L^{\times2}K^{\times}$, we may assume that $v_{P_1}(\ell) = v_{P_2}(\ell) = 1$ and $v_{P_3}(\ell) = v_{P_4}(\ell) = 0$.  Then, by applying the analogue of~\eqref{diag:purity-cores} for vertical divisors, we have
			\[
				\partial_{X_{t_0}}(\Cor(\ell, x-\alpha)) = (x - \alpha_1(t_0))(x - \alpha_2(t_0)) \in \kappa(X_{t_0})^{\times}/\kappa(X_{t_0})^{\times2}.
			\]  
			The divisor $P_1(t_0) - P_2(t_0)$ is a non-trivial $2$ torsion element of $\Pic(X_{t_0})$ so $\frac{x - \alpha_1(t_0)}{x - \alpha_2(t_0)} $ is not a square.  This completes the proof of the containment.
			
			Now we will prove that $\left(\ker \N\right)_{S\textup{-unr}}$ is finite.  Let $\ell\in L^{\times}$ be a representative of an element in $\left(\ker \N\right)_{S\textup{-unr}}$.  Let $t_1, \ldots, t_r \in W\setminus S$ be such that $v_P(\ell) \equiv 1 \pmod 2$ for all $P\in \varpi^{-1}\left(t_1\cup\cdots\cup t_r\right)$ and let $s\in S$.  Since $\Jac(W)$ is divisible, there exists some $D\in \Jac(W)$ such that 
			\[
				t_1 + t_2 + \cdots t_r - r\cdot s \sim 2D;
			\]
			Let $\gamma \in K$ be an element that gives this equivalence, i.e. $\divv(\gamma) = t_1 + t_2 + \cdots + t_r - r\cdot s - 2D$.  By replacing $\ell$ with $\ell\gamma$, we may assume that for all $P \in Z\setminus \varpi^{-1}(S)$ we have $v_P(\ell) \equiv 0 \pmod 2$.  
			
			For all $P, Q \in \varpi^{-1}(S)$, let $\ell_{P,Q} \in L^{\times}$ be such that $\divv(\ell_{P, Q}) = P - Q + 2D$ for some degree $0$ divisor $D$.  After multiplying $\ell$ by a suitable finite number of $\ell_{P,Q}$, then we obtain an element $\ell'\in L^{\times2}$ such that $\divv(\ell') = 2D$.  Thus, the class of $\ell'$ in $L^{\times}/L^{\times2}$ is determined by an element of $\Jac(Z)[2]$.  Since $\Jac(Z)[2]$ is finite of over $2^{2g(Z)}$, this shows that $\left(\ker \N\right)_{S\textup{-unr}}$
		\end{proof}

	\vspace{-.1in}
	\begin{bibdiv}
		\begin{biblist}
			
			\bib{CTSkorSD-GeometricSelmer}{article}{
			   author={Colliot-Th{\'e}l{\`e}ne, J.-L.},
			   author={Skorobogatov, A. N.},
			   author={Swinnerton-Dyer, Peter},
			   title={Double fibres and double covers: paucity of rational points},
			   journal={Acta Arith.},
			   volume={79},
			   date={1997},
			   number={2},
			   pages={113--135},
			   issn={0065-1036},
			   review={\MR{1438597 (98a:11081)}},
			}

			\bib{Fujiwara-purity}{article}{
			   author={Fujiwara, Kazuhiro},
			   title={A proof of the absolute purity conjecture (after Gabber)},
			   conference={
			      title={Algebraic geometry 2000, Azumino (Hotaka)},
			   },
			   book={
			      series={Adv. Stud. Pure Math.},
			      volume={36},
			      publisher={Math. Soc. Japan},
			      place={Tokyo},
			   },
			   date={2002},
			   pages={153--183},
			   review={\MR{1971516 (2004d:14015)}},
			}
			
			\bib{GS-csa}{book}{
				   author={Gille, Philippe},
				   author={Szamuely, Tam{\'a}s},
				   title={Central simple algebras and Galois cohomology},
				   series={Cambridge Studies in Advanced Mathematics},
				   volume={101},
				   publisher={Cambridge University Press},
				   place={Cambridge},
				   date={2006},
				   pages={xii+343},
				   isbn={978-0-521-86103-8},
				   isbn={0-521-86103-9},
				   review={\MR{2266528 (2007k:16033)}},
				}

			\bib{Harari-transcendental}{article}{
			   author={Harari, David},
			   title={Obstructions de Manin transcendantes},
			   language={French},
			   conference={
			      title={Number theory},
			      address={Paris},
			      date={1993--1994},
			   },
			   book={
			      series={London Math. Soc. Lecture Note Ser.},
			      volume={235},
			      publisher={Cambridge Univ. Press},
			      place={Cambridge},
			   },
			   date={1996},
			   pages={75--87},
			   review={\MR{1628794 (99e:14025)}},
			   doi={10.1017/CBO9780511662003.004},
			}

			\bib{HS-Enriques}{article}{
			   author={Harari, David},
			   author={Skorobogatov, Alexei},
			   title={Non-abelian descent and the arithmetic of Enriques surfaces},
			   journal={Int. Math. Res. Not.},
			   date={2005},
			   number={52},
			   pages={3203--3228},
			   issn={1073-7928},
			   review={\MR{2186792 (2006m:14031)}},
			   doi={10.1155/IMRN.2005.3203},
			}
			
			\bib{HVA-K3Hasse}{article}{
			   author={Hassett, Brendan},
			   author={V{\'a}rilly-Alvarado, Anthony},
			   title={Failure of the Hasse principle on general K3 surfaces},
			   note={Preprint, \texttt{arXiv:1110.1738}}
			}

			\bib{HVAV-K3}{article}{
			   author={Hassett, Brendan},
			   author={V{\'a}rilly-Alvarado, Anthony},
			   author={Varilly, Patrick},
			   title={Transcendental obstructions to weak approximation on general K3
			   surfaces},
			   journal={Adv. Math.},
			   volume={228},
			   date={2011},
			   number={3},
			   pages={1377--1404},
			   issn={0001-8708},
			   review={\MR{2824558}},
			   doi={10.1016/j.aim.2011.06.017},
			}
			
			\bib{KT-effectivity}{article}{
			   author={Kresch, Andrew},
			   author={Tschinkel, Yuri},
			   title={Effectivity of Brauer-Manin obstructions on surfaces},
			   journal={Adv. Math.},
			   volume={226},
			   date={2011},
			   number={5},
			   pages={4131--4144},
			   issn={0001-8708},
			   review={\MR{2770443}},
			   doi={10.1016/j.aim.2010.11.012},
			}
			
			\bib{Ieronymou-transcendental}{article}{
			   author={Ieronymou, Evis},
			   title={Diagonal quartic surfaces and transcendental elements of the
			   Brauer groups},
			   journal={J. Inst. Math. Jussieu},
			   volume={9},
			   date={2010},
			   number={4},
			   pages={769--798},
			   issn={1474-7480},
			   review={\MR{2684261 (2011g:14053)}},
			   doi={10.1017/S1474748010000149},
			}
			\bib{Manin-BMobs}{article}{
			   author={Manin, Y. I.},
			   title={Le groupe de Brauer-Grothendieck en g\'eom\'etrie 
						diophantienne},
			   conference={
			      title={Actes du Congr\`es International des 
					Math\'ematiciens},
			      address={Nice},
			      date={1970},
			   },
			   book={
			      publisher={Gauthier-Villars},
			      place={Paris},
			   },
			   date={1971},
			   pages={401--411},
			   review={\MR{0427322 (55 \#356)}},
			}
			
			\bib{PS-descent}{article}{
			   author={Poonen, Bjorn},
			   author={Schaefer, Edward F.},
			   title={Explicit descent for Jacobians of cyclic covers of the projective
			   line},
			   journal={J. Reine Angew. Math.},
			   volume={488},
			   date={1997},
			   pages={141--188},
			   issn={0075-4102},
			   review={\MR{1465369 (98k:11087)}},
			}
			
			\bib{Serre-LocalFields}{book}{
			   author={Serre, Jean-Pierre},
			   title={Local fields},
			   series={Graduate Texts in Mathematics},
			   volume={67},
			   note={Translated from the French by Marvin Jay Greenberg},
			   publisher={Springer-Verlag},
			   place={New York},
			   date={1979},
			   pages={viii+241},
			   isbn={0-387-90424-7},
			   review={\MR{554237 (82e:12016)}},
			}
			
			\bib{Skorobogatov-torsors}{book}{
		    	author={Skorobogatov, Alexei N.},
		     	title={Torsors and rational points},
		    	series={Cambridge Tracts in Mathematics},
		    	volume={144},
		 		publisher={Cambridge University Press},
		     	place={Cambridge},
		      	date={2001},
		     	pages={viii+187},
		      	isbn={0-521-80237-7},
		    	review={MR1845760 (2002d:14032)},
			}
			
			\bib{SSD-2descent}{article}{
			   author={Skorobogatov, Alexei},
			   author={Swinnerton-Dyer, Peter},
			   title={2-descent on elliptic curves and rational points on certain Kummer
			   surfaces},
			   journal={Adv. Math.},
			   volume={198},
			   date={2005},
			   number={2},
			   pages={448--483},
			   issn={0001-8708},
			   review={\MR{2183385 (2006g:11129)}},
			   doi={10.1016/j.aim.2005.06.005},
			}
			
			\bib{Wittenberg-transcendental}{article}{
			   author={Wittenberg, Olivier},
			   title={Transcendental Brauer-Manin obstruction on a pencil of elliptic
			   curves},
			   conference={
			      title={Arithmetic of higher-dimensional algebraic varieties (Palo
			      Alto, CA, 2002)},
			   },
			   book={
			      series={Progr. Math.},
			      volume={226},
			      publisher={Birkh\"auser Boston},
			      place={Boston, MA},
			   },
			   date={2004},
			   pages={259--267},
			   review={\MR{2029873 (2005c:11082)}},
			}
		\end{biblist}
	\end{bibdiv}

\end{document}